\theoremstyle{plain} 
\newtheorem{theorem}             {Theorem}[section]
\newtheorem{corollary}  [theorem]{Corollary}
\theoremstyle{definition}
\newtheorem{definition} [theorem]{Definition}
\theoremstyle{remark}
\newtheorem{remark}     [theorem]{Remark}
\newtheorem*{notation}           {Notation}
\newtheorem*{acknowledgment}     {Acknowledgments}
\newcommand{\infsup}[2]{\operatorname{inf\vphantom{sup}}\displaylimits_{#1}
  \operatorname{sup\vphantom{inf}}\displaylimits_{#2}}
\begin{document}
\title{Banach space projections and Petrov--Galerkin estimates}
\author{Ari Stern}
\address{Department of Mathematics\\
Washington University in St.~Louis\\
Campus Box 1146\\
One Brookings Drive\\
St.~Louis, Missouri 63130-4899, USA}
\email{astern@math.wustl.edu}

\begin{abstract}
  We sharpen the classic \textit{a priori} error estimate of
  Babu\v{s}ka for Petrov--Galerkin methods on a Banach space. In
  particular, we do so by (i) introducing a new constant, called the
  \emph{Banach--Mazur constant}, to describe the geometry of a normed
  vector space; (ii) showing that, for a nontrivial projection $P$, it
  is possible to use the Banach--Mazur constant to improve upon the
  na\"ive estimate $ \lVert I - P \rVert \leq 1 + \lVert P \rVert $;
  and (iii) applying that improved estimate to the Petrov--Galerkin
  projection operator. This generalizes and extends a 2003 result of
  Xu and Zikatanov for the special case of Hilbert spaces.
\end{abstract}

\subjclass[2010]{65N30, 46B20}

\maketitle
 
\section{Introduction}
\label{sec:intro}

In a landmark 1971 paper, \citet{Babuska1971} developed a framework
for the analysis of finite element methods. This analysis encompassed
not only Galerkin methods for coercive bilinear forms (as in the
pioneering work of \citet{Cea1964}), but also Galerkin methods for
non-coercive bilinear forms (such as mixed finite element methods,
cf.~\citet{BrFo1991}) and Petrov--Galerkin methods more generally. A
key innovation in this work was the replacement of the coercivity
assumption by the so-called \emph{inf-sup condition}. (See also the
essential contribution by \citet{Brezzi1974}.) One of the main results
of Babu\v{s}ka's paper is an \textit{a priori} error estimate for
Petrov--Galerkin methods satisfying this inf-sup condition.

Remarkably, more than three decades passed before a 2003 paper, by
\citet{XuZi2003}, pointed out that the constant in Babu\v{s}ka's
estimate can be improved (by $1$) when the space of trial functions is
a Hilbert space. To develop this improved estimate, \citet{XuZi2003}
used an identity concerning the operator norm of a projection on a
Hilbert space. However, this identity is completely idiosyncratic to
Hilbert spaces, and for arbitrary Banach spaces, Babu\v{s}ka's
original estimate has yet to be improved.

The present paper aims to fill this gap, sharpening the constant in
Babu\v{s}ka's estimate for Petrov--Galerkin methods on a Banach
space. The degree of improvement depends on how ``close'' the trial
space is to being Hilbert, in a sense related to Banach--Mazur
distance. In particular, for the most pathological Banach spaces, such
as non-reflexive spaces, no improvement is obtained over Babu\v{s}ka's
estimate, while in the case of Hilbert spaces, we recover Xu and
Zikatanov's improved estimate. The paper is organized as follows:
\begin{itemize}

\item \autoref{sec:background} briefly reviews the results of
  \citet{Cea1964}, \citet{Babuska1971}, and \citet{XuZi2003}, as
  summarized above. In addition to providing the necessary background,
  this also serves to fix the notation and terminology used later in
  the paper.

\item \autoref{sec:banachMazur} introduces the \emph{Banach--Mazur
    constant} of a normed vector space, which quantifies how ``close''
  this space is to being an inner product space. We also show how this
  relates to the well-studied von~Neumann--Jordan constant, which
  serves a similar purpose.

\item \autoref{sec:projection} contains the main technical result: an
  estimate for projection operators on a normed vector space,
  generalizing the Hilbert space projection identity used by {Xu} and
  Zikatanov. This estimate depends fundamentally on the Banach--Mazur
  constant introduced in the previous section.

\item \autoref{sec:lp} illustrates the preceding theory by applying it
  to an important class of Banach spaces: $ L _p $ and Sobolev
  spaces. We compute the Banach--Mazur constants of these spaces and
  discuss the related properties of projection operators, showing that
  the main estimate of \autoref{sec:projection} is sharp.

\item Finally, \autoref{sec:petrovGalerkin} contains the main theorem:
  a sharpened \textit{a priori} error estimate for Petrov--Galerkin
  methods on a Banach space. This is proved by applying the estimate
  from \autoref{sec:projection} to the Petrov--Galerkin projection
  operator.

\end{itemize} 

\begin{acknowledgment}
  Many thanks to Michael Holst and John McCarthy for valuable comments
  and feedback on this work in its early stages.
\end{acknowledgment}

\section{Background: analysis of Petrov--Galerkin methods}
\label{sec:background}

Let $X$ be a Banach space, $Y$ be a reflexive Banach space, and $ a
\in \mathcal{L} ( X \times Y , \mathbb{R} ) $ be a continuous bilinear
form, so that $ \bigl\lvert a ( x, y ) \bigr\rvert \leq M \lVert x
\rVert _X \lVert y \rVert _Y $ for some $ M > 0 $. Given $ f \in Y
^\ast $, we consider the linear problem:
\begin{equation}
  \label{eqn:continuousProblem}
  \text{Find $ u \in X $ such that $ a ( u, v ) = \langle f, v \rangle
    $ for all $ v \in Y $.}
\end{equation} 
If $ X _h \subset X $ and $ Y _h \subset Y $ are closed (e.g.,
finite-dimensional) subspaces, then we also consider the related
problem:
\begin{equation}
  \label{eqn:discreteProblem}
  \text{Find $ u _h \in X _h $ such that $ a ( u _h , v _h ) = \langle
    f, v _h \rangle $ for all $ v _h \in Y _h $.}
\end{equation}
The approximation of \eqref{eqn:continuousProblem} by
\eqref{eqn:discreteProblem} is called the \emph{Petrov--Galerkin
  method}, or simply the \emph{Galerkin method} in the special case $
X _h = Y _h \subset X = Y $.

The most elementary \textit{a priori} error estimate for the Galerkin
method is due to \citet{Cea1964}, who proved that if the bilinear form
satisfies the coercivity condition $ a ( x, x ) \geq m \lVert x \rVert
^2 _X $ for some $ m > 0 $, then
\begin{equation*}
  \lVert u - u _h \rVert _X \leq \frac{ M }{ m } \inf _{ x _h \in X _h
  } \lVert u - x _h \rVert _X .
\end{equation*} 
Note that coercivity is sufficient (although not necessary) for
problems \eqref{eqn:continuousProblem} and \eqref{eqn:discreteProblem}
to be well-posed.  C\'ea's theorem does not apply to the general form
of the Petrov--Galerkin method (since coercivity is meaningless when $
X \neq Y $), nor even to the Galerkin method with non-coercive
bilinear forms, which arise in mixed finite element methods.

A more general condition for \eqref{eqn:continuousProblem} to be
well-posed---which is both necessary and sufficient---is given by the
\emph{inf-sup condition}
\begin{equation*}
  \infsup{ 0 \neq x \in X }{ 0 \neq y \in Y } \frac{ a ( x, y ) }{
    \lVert x \rVert _X \lVert y \rVert _Y } = m > 0 , \qquad \infsup{
    0 \neq y \in Y }{0 \neq x \in X } \frac{ a ( x, y ) }{  \lVert x
    \rVert _X \lVert y \rVert _Y } = m ^\ast > 0.
\end{equation*} 
This is proved by applying Banach's closed range and open mapping
theorems to the operator $ A \colon X \rightarrow Y ^\ast ,\ x \mapsto
a ( x, \cdot ) $, and to its adjoint $ A ^\ast \colon Y \rightarrow X
^\ast ,\ y \mapsto a ( \cdot , y ) $. In fact, when the inf-sup
condition is satisfied, the constants $m$ and $ m ^\ast $ are equal,
since 
\begin{equation*}
 m ^{-1} = \bigl\lVert A ^{-1} \bigr\rVert _{ \mathcal{L} ( Y
  ^\ast , X) } = \bigl\lVert ( A ^\ast ) ^{-1} \bigr\rVert _{
  \mathcal{L} ( X ^\ast , Y ) } = (m ^\ast) ^{-1} .
\end{equation*}
Likewise, the problem \eqref{eqn:discreteProblem} is well-posed if and
only if
\begin{equation*}
  \infsup{ 0 \neq x _h \in X _h }{ 0 \neq y _h \in Y _h } \frac{ a ( x
    _h , y _h  ) }{
    \lVert x _h \rVert _X \lVert y _h \rVert _Y } = \infsup{
    0 \neq y _h \in Y _h }{0 \neq x _h \in X _h  } \frac{ a ( x _h , y
    _h ) }{  \lVert x _h \rVert _X \lVert y _h \rVert _Y } = m _h  > 0
  ,
\end{equation*} 
which is called the \emph{discrete inf-sup condition}.

\citet{Babuska1971} showed that, if the inf-sup conditions are
satisfied, then the solutions to \eqref{eqn:continuousProblem} and
\eqref{eqn:discreteProblem} satisfy the error estimate
\begin{equation*}
  \lVert u - u _h \rVert _X \leq \biggl( 1 + \frac{ M }{ m _h }
  \biggr) \inf _{ x _h \in X _h } \lVert u - x _h \rVert _X .
\end{equation*} 
The proof relies on the \emph{Petrov--Galerkin projection operator} on
$X$, denoted by $ P _h $, which maps each $ u \in X $ to its
Petrov--Galerkin approximation $ u _h \in X _h $. Since $ P _h x _h =
x _h $ for all $ x _h \in X _h $, we have
\begin{equation*}
  \lVert u - u _h \rVert _X = \bigl\lVert ( I - P _h ) u \bigr\rVert
  _X \leq \bigl\lVert ( I - P _h ) ( u - x _h ) \bigr\rVert _X \leq
  \lVert I - P _h \rVert _{ \mathcal{L} ( X , X ) } \lVert u - x _h \rVert _X .
\end{equation*} 
Hence, the estimate follows by observing that
\begin{equation*}
  \lVert I - P _h \rVert _{ \mathcal{L} ( X, X ) } \leq 1 + \lVert P
  _h \rVert _{ \mathcal{L} ( X, X ) } \leq 1 + \frac{ M }{ m _h } ,
\end{equation*} 
and by taking the infimum over all $ x _h \in X _h $.

The Babu\v{s}ka estimate superficially resembles that of C\'ea, with
the glaring exception of $1$ being added to the constant. However,
\citet{XuZi2003} observed that, in the case where $X$ is a Hilbert
space, this additional term is unneccessary, and one obtains the
sharpened estimate
\begin{equation*}
  \lVert u - u _h \rVert _X \leq \frac{ M }{ m _h } \inf _{ x _h \in X
    _h } \lVert u - x _h \rVert _X .
\end{equation*} 
The key insight is that, in a Hilbert space, nontrivial projection
operators $P$ satisfy $ \lVert I - P \rVert _{ \mathcal{L} ( X, X ) }
= \lVert P \rVert _{ \mathcal{L} ( X, X ) } $, so applying this
identity to the Petrov--Galerkin projection yields $ \lVert I - P _h
\rVert _{ \mathcal{L} ( X, X ) } \leq \frac{ M }{ m _h } $.  (See
\citet{Szyld2006} for a discussion of this undeservedly obscure and
frequently rediscovered identity.)

\section{The Banach--Mazur constant of a normed vector space}
\label{sec:banachMazur}

In this section, we introduce the \emph{Banach--Mazur constant} of a
normed vector space $X$, which quantifies the degree to which $X$
fails to be an inner-product space. This constant will play a crucial
role in the projection estimates of \autoref{sec:projection} and
\autoref{sec:petrovGalerkin}. First, we recall the definition of
(multiplicative) Banach--Mazur distance between finite-dimensional
normed vector spaces of equal dimension.

\begin{definition}
  If $V$ and $W$ are finite-dimensional normed vector spaces with $
  \dim V = \dim W $, then the \emph{Banach--Mazur
    distance} between $V$ and $W$ is
  \begin{equation*}
    {d} _{ B M } ( V, W ) = \inf \bigl\{ \lVert T \rVert \lVert T
    ^{-1} \rVert : T \text{ is a linear isomorphism } V \rightarrow W
    \bigr\} .
  \end{equation*} 
\end{definition}

\begin{definition}
  If $X$ is a normed vector space with $ \dim X \geq 2 $, then we
  define the \emph{Banach--Mazur constant} of $X$ to be
  \begin{equation*}
    C _{ B M } (X) = \sup \Bigl\{ \bigl( {d} _{ B M } ( V ,
    \ell _2 ^2 ) \bigr) ^2 : V \subset X,\ \dim V = 2 \Bigr\} ,
  \end{equation*} 
  where $ \ell _2 ^2 $ denotes the two-dimensional $ \ell _2 $ space
  (i.e., $\mathbb{R}^2$ equipped with the Euclidean norm $ \lVert
  \cdot \rVert _2 $).
\end{definition}

\begin{notation}
  For notational brevity, we will omit subscripts from operator norms
  and from $ \lVert \cdot \rVert _X $, denoting each of these simply
  by $ \lVert \cdot \rVert $, where the norm is clear from
  context. The Euclidean norm will always be denoted by $ \lVert \cdot
  \rVert _2 $.
\end{notation}

There are various other such ``geometric constants'' for normed vector
spaces; see \citet{KaTa2010} for a survey of recent results on several
of these constants.  Generally, these constants lie between $1$ and
$2$, equaling $1$ in the case of an inner product space, and equaling
$2$ for the most pathological spaces, such as non-reflexive
spaces. One of the oldest and best-known is the
\emph{von~Neumann--Jordan constant}, dating to the 1935 paper of
\citet{JoVo1935} (see also \citet{Clarkson1937}), which measures the
degree to which the norm satisfies (or fails to satisfy) the
parallelogram law.

\begin{definition}
  The \emph{von Neumann--Jordan constant} of a normed vector space $X$
  is
  \begin{equation*}
    C _{ N J } (X) = \sup \biggl\{ \frac{ \lVert x + y \rVert ^2 +
        \lVert x - y \rVert ^2 }{ 2 \bigl( \lVert x \rVert ^2 + \lVert
        y \rVert ^2 \bigr) } : x, y \in X \text{ not both zero}
      \biggr\} .
  \end{equation*} 
\end{definition}

The following result establishes the relationship between the
Banach--Mazur and von~Neumann--Jordan constants.

\begin{theorem}
  \label{thm:bmInequality}
  $ 1 \leq C _{ N J } (X) \leq C _{ B M } (X) \leq 2 $.
\end{theorem}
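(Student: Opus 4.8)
The plan is to verify the three inequalities one at a time, left to right. The leftmost bound $1 \leq C_{NJ}(X)$ is immediate: taking $y = 0$ and any $x \neq 0$ in the definition of $C_{NJ}(X)$ produces the ratio
\[
  \frac{ \lVert x \rVert ^2 + \lVert x \rVert ^2 }{ 2 \lVert x \rVert ^2 } = 1,
\]
so the supremum is at least $1$. I also record for later use that every term defining $C_{BM}(X)$ is at least $1$, since any isomorphism satisfies $\lVert T \rVert \lVert T ^{-1} \rVert \geq \lVert T ^{-1} T \rVert = 1$; hence $d_{BM}(V, \mathbb{R}^2) \geq 1$ and $C_{BM}(X) \geq 1$.

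For the central inequality $C_{NJ}(X) \leq C_{BM}(X)$, the key observation is that the quotient in the definition of $C_{NJ}(X)$ involves only $x$, $y$, and $x \pm y$, all of which lie in $V \coloneqq \operatorname{span}\{x, y\}$, a subspace of dimension at most $2$. So I would fix a pair $x, y$ not both zero and split into cases. If $\dim V \leq 1$ the parallelogram law holds on a line and the ratio equals exactly $1 \leq C_{BM}(X)$. If $\dim V = 2$, I would compare with the Euclidean structure through an \emph{arbitrary} isomorphism $T \colon V \to \mathbb{R}^2$: from $\lVert T v \rVert _2 \leq \lVert T \rVert \lVert v \rVert$ one bounds the denominator $\lVert x \rVert ^2 + \lVert y \rVert ^2$ below, while from $\lVert v \rVert \leq \lVert T ^{-1} \rVert \lVert T v \rVert _2$ one bounds the numerator $\lVert x + y \rVert ^2 + \lVert x - y \rVert ^2$ above, after which the parallelogram law in $\mathbb{R}^2$ cancels the cross terms. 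This yields
\[
  \frac{ \lVert x + y \rVert ^2 + \lVert x - y \rVert ^2 }{ 2 \bigl( \lVert x \rVert ^2 + \lVert y \rVert ^2 \bigr) } \leq \lVert T \rVert ^2 \lVert T ^{-1} \rVert ^2 .
\]
Taking the infimum over all such $T$ bounds the ratio by $\bigl( d_{BM}(V, \mathbb{R}^2) \bigr)^2 \leq C_{BM}(X)$, and taking the supremum over all pairs $x, y$ gives $C_{NJ}(X) \leq C_{BM}(X)$.

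Finally, the rightmost bound $C_{BM}(X) \leq 2$ reduces, by definition, to showing $d_{BM}(V, \mathbb{R}^2) \leq \sqrt{2}$ for every $2$-dimensional subspace $V \subset X$. This is exactly the $n = 2$ case of John's theorem, which guarantees $d_{BM}(W, \mathbb{R}^n) \leq \sqrt{n}$ for every $n$-dimensional normed space $W$ (with $\mathbb{R}^n$ Euclidean), via the maximal-volume inscribed ellipsoid. Squaring and taking the supremum over $V$ gives $C_{BM}(X) \leq 2$.

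I expect the main obstacle to be the central inequality, and specifically keeping the two operator-norm estimates pointing in the correct directions, so that the numerator is bounded above by $\lVert T ^{-1} \rVert$ and the denominator below by $\lVert T \rVert$; the decisive structural fact is that it is the parallelogram identity in $\mathbb{R}^2$ that makes the cross terms vanish. The upper bound $C_{BM}(X) \leq 2$, although it supplies the worst-case value, is essentially an appeal to John's theorem and requires no fresh computation; its only mild subtlety is that the supremum defining $C_{BM}(X)$ ranges over infinitely many $2$-dimensional subspaces, but since the John bound $\sqrt{2}$ is uniform in $V$, this is harmless.
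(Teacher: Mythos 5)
Your proposal is correct and follows essentially the same route as the paper: the linear-dependence case handled by the exact parallelogram law, an arbitrary isomorphism $T \colon V \to \mathbb{R}^2$ transferring the parallelogram identity from $\mathbb{R}^2$ at the cost of $\lVert T \rVert^2 \lVert T^{-1} \rVert^2$, an infimum over $T$, and John's theorem for the upper bound $C_{BM}(X) \leq 2$. The only (harmless) difference is that you prove $1 \leq C_{NJ}(X)$ directly by taking $y = 0$, where the paper simply cites Jordan and von Neumann.
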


\begin{proof}
  The inequality $ 1 \leq C _{ N J } (X) $ appears in
  \citet{JoVo1935}. John's theorem on maximal ellipsoids
  \citep{John1948} implies that $ {d} _{ B M } ( V , \ell_2^2 )
  \leq \sqrt{ 2 } $ for all $ V \subset X $ with $ \dim V = 2 $, and
  thus $ C _{ B M } (X) \leq 2 $. To prove the remaining inequality, $
  C _{ N J } (X) \leq C _{ B M } (X) $, it suffices to show that
  \begin{equation*}
    \lVert x + y \rVert ^2 + \lVert x - y \rVert ^2 \leq 2 C _{ B M }
    (X) \bigl( \lVert x \rVert ^2 + \lVert y \rVert ^2 \bigr) ,
  \end{equation*} 
  for all $ x, y \in X $. This is obvious if $x$ and $y$ are linearly
  dependent, since in that case, they satisfy the parallelogram law
  exactly. Otherwise, take the two-dimensional subspace $ V =
  \operatorname{span} \{ x, y \} $. For any isomorphism $ T \colon V
  \rightarrow \ell_2^2 $,
  \begin{align*}
    \lVert x + y \rVert ^2 + \lVert x - y \rVert ^2 &\leq \lVert T
    ^{-1} \rVert ^2 \bigl( \lVert T x + T y \rVert _2 ^2 + \lVert T x
    - T y \rVert _2 ^2 \bigr) \\
    &= 2 \lVert T ^{-1} \rVert ^2 \bigl( \lVert T x \rVert _2 ^2 +
    \lVert T y \rVert _2 ^2 \bigr) \\
    &\leq 2 \lVert T \rVert ^2 \lVert T ^{-1} \rVert ^2 \bigl( \lVert
    x \rVert ^2 + \lVert y \rVert ^2 \bigr) ,
  \end{align*} 
  where the parallelogram law for $\ell_2^2$ is applied in the
  second line.  Finally, taking the infimum over all $T$ yields
  \begin{align*}
    \lVert x + y \rVert ^2 + \lVert x - y \rVert ^2 &\leq 2 \bigl( {d}
    _{ B M } ( V , \ell_2^2 ) \bigr) ^2 \bigl( \lVert x \rVert ^2
    + \lVert y \rVert ^2 \bigr) \\
    &\leq 2 C _{ B M } (X) \bigl( \lVert x \rVert ^2 + \lVert y \rVert
    ^2 \bigr),
  \end{align*}
  which completes the proof.
\end{proof}

\begin{theorem}
  \label{thm:bmInnerProduct}
  $ C _{ B M } (X) = 1 $ if and only if $X$ is an inner product space.
\end{theorem}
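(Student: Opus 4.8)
The plan is to prove the two implications separately, using \autoref{thm:bmInequality} together with the classical Jordan--von~Neumann characterization of inner product spaces for the substantive direction.

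For the ``only if'' direction I would argue directly. Suppose $X$ is an inner product space. Any two-dimensional subspace $V \subset X$ inherits the inner product, and Gram--Schmidt applied to a basis of $V$ yields an orthonormal basis; the corresponding coordinate map is then a linear isometry of $V$ onto $\mathbb{R}^2$. Hence $d_{BM}(V,\mathbb{R}^2) = 1$ for every such $V$, and taking the supremum over all $V$ gives $C_{BM}(X) = 1$.

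For the ``if'' direction, suppose $C_{BM}(X) = 1$. By \autoref{thm:bmInequality} we have $1 \leq C_{NJ}(X) \leq C_{BM}(X) = 1$, so $C_{NJ}(X) = 1$, which by definition means $\lVert x + y \rVert^2 + \lVert x - y \rVert^2 \leq 2\bigl(\lVert x \rVert^2 + \lVert y \rVert^2\bigr)$ for all $x, y \in X$. The key step is to upgrade this one-sided inequality to the full parallelogram law. I would do so by applying the same inequality to the pair $(x+y,\, x-y)$ in place of $(x,y)$: since $(x+y)+(x-y) = 2x$ and $(x+y)-(x-y) = 2y$, this yields $4\lVert x \rVert^2 + 4\lVert y \rVert^2 \leq 2\bigl(\lVert x+y \rVert^2 + \lVert x-y \rVert^2\bigr)$, i.e.\ the reverse inequality (noting that $x+y$ and $x-y$ are not both zero when $x,y$ are not both zero). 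Combining the two gives the parallelogram law on all of $X$.

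Finally, with the parallelogram law established, the conclusion is immediate from the theorem of \citet{JoVo1935}: a normed space whose norm obeys the parallelogram law is an inner product space, the inner product being recovered via the polarization identity. The main obstacle is really just the substitution step that promotes the von~Neumann--Jordan inequality to an equality; the remaining pieces (Gram--Schmidt, \autoref{thm:bmInequality}, and the Jordan--von~Neumann theorem) are elementary or already in hand.
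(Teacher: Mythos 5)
Your proof is correct and takes essentially the same route as the paper: the ``only if'' direction via isometries of two-dimensional subspaces onto $\mathbb{R}^2$, and the converse by combining \autoref{thm:bmInequality} with the Jordan--von~Neumann characterization. The only difference is that you explicitly carry out the substitution $(x,y)\mapsto(x+y,\,x-y)$ to upgrade $C_{NJ}(X)=1$ to the full parallelogram law, a step the paper absorbs into its citation of \citet{JoVo1935}.
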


\begin{proof}
  If $X$ is an inner product space, then any two-dimensional subspace
  is unitarily isomorphic to $\ell_2^2$, so $ C _{ B M } (X) = 1
  $. Conversely, if $ C _{ B M } (X) = 1 $, then
  \autoref{thm:bmInequality} implies $ C _{ N J } (X) = 1 $, so by the
  Jordan--von~Neumann theorem \citep{JoVo1935}, $X$ is an inner
  product space.
\end{proof}

\begin{remark}
  The constants $ C _{ B M } (X) $ and $ C _{ N J } (X) $ agree in the
  most extreme cases. For Hilbert spaces, we have seen that $ C _{ B M
  } (X) = C _{ N J } (X) = 1 $. At the opposite extreme, the most
  pathological spaces---including non-reflexive spaces, such as $ L _1
  $ and $ L _\infty $---have $ C _{ N J } (X) = 2 $, and hence $ C _{
    B M } (X) = 2 $ by \autoref{thm:bmInequality} (see also
  \citet{Clarkson1937}). More specifically, a theorem of
  \citet{KaTa1997} states that, if $ C _{ N J } (X) < 2 $, then $X$ is
  super-reflexive. Consequently, if $X$ fails to be super-reflexive
  (in particular, if it is non-reflexive), then $ C _{ N J } (X) = C
  _{ B M } (X) = 2 $.
\end{remark}

\section{A projection estimate for normed vector spaces}
\label{sec:projection}

Having introduced the Banach--Mazur constant, we are now equipped to
prove the main technical result: an estimate for projection operators
on normed vector spaces. This generalizes the Hilbert space projection
identity used by \citet{XuZi2003}.

\begin{theorem}
  \label{thm:projection}
  Let $P$ be a nontrivial projection operator (i.e., $ 0 \neq P = P ^2
  \neq I $) on a normed vector space $X$. Then $ \lVert I - P \rVert
  \leq C \lVert P \rVert $, where $ C = \min \bigl\{ 1 + \lVert P
  \rVert ^{-1} , C _{ B M } (X) \bigr\} $.
\end{theorem}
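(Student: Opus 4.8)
The constant $C = \min\{1 + \lVert P\rVert^{-1}, C_{BM}(X)\}$ packages two separate bounds, and the plan is to establish each in turn. The first, $\lVert I - P\rVert \leq 1 + \lVert P\rVert = (1 + \lVert P\rVert^{-1})\lVert P\rVert$, is nothing more than the triangle inequality $\lVert I - P\rVert \leq \lVert I\rVert + \lVert P\rVert$ together with $\lVert I\rVert = 1$; this merely reproduces Babu\v{s}ka's original constant and requires no work. All of the substance lies in proving $\lVert I - P\rVert \leq C_{BM}(X)\lVert P\rVert$, and the whole idea is to reduce this to the two-dimensional, inner-product setting in which the Hilbert space identity $\lVert I - Q\rVert = \lVert Q\rVert$ (recalled in \autoref{sec:background}; see \citet{Szyld2006}) is available.

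To carry out the reduction, I would fix an arbitrary $x \in X$ and split it along the projection as $x = Px + (I - P)x =: u + w$, where $u \in \operatorname{ran} P$ and $w \in \ker P$. If $w = 0$ the pointwise bound $\lVert (I-P)x\rVert \leq C_{BM}(X)\lVert P\rVert\lVert x\rVert$ is trivial, and if $u = 0$ it follows from $C_{BM}(X) \geq 1$ (by \autoref{thm:bmInequality}) together with $\lVert P\rVert \geq 1$, which holds for any nontrivial projection. When $u$ and $w$ are both nonzero they are automatically linearly independent, since a common nonzero multiple would lie in $\operatorname{ran} P \cap \ker P = \{0\}$; hence $V := \operatorname{span}\{u, w\}$ is a genuine two-dimensional subspace containing $x$. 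The key structural observation is that $V$ is invariant under $P$: because $Pu = u$ and $Pw = 0$, the restriction $P|_V$ is exactly the nontrivial, rank-one projection of $V$ onto $\mathbb{R}u$ along $\mathbb{R}w$.

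With this in hand I would transport the estimate to $\mathbb{R}^2$. For any isomorphism $T \colon V \to \mathbb{R}^2$, the conjugate $Q := T\, P|_V\, T^{-1}$ is a nontrivial projection on $\mathbb{R}^2$, and a short chain of inequalities gives $\lVert (I-P)x\rVert \leq \lVert T^{-1}\rVert\, \lVert I - Q\rVert_2\, \lVert T\rVert\, \lVert x\rVert$. Here is the crucial move: I apply the identity $\lVert I - Q\rVert_2 = \lVert Q\rVert_2$ to replace $I - Q$ by $Q$, and then bound $\lVert Q\rVert_2 \leq \lVert T\rVert\,\lVert T^{-1}\rVert\,\lVert P\rVert$ by unwinding the conjugation. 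Collecting factors yields $\lVert (I-P)x\rVert \leq \bigl(\lVert T\rVert\,\lVert T^{-1}\rVert\bigr)^2\,\lVert P\rVert\,\lVert x\rVert$, and taking the infimum over all $T$ turns $\bigl(\lVert T\rVert\lVert T^{-1}\rVert\bigr)^2$ into $\bigl(d_{BM}(V,\mathbb{R}^2)\bigr)^2 \leq C_{BM}(X)$. It is precisely this squared norm product that accounts for the square in the definition of $C_{BM}$. Since the resulting bound is uniform in $x$, taking the supremum over $x$ completes the argument.

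The step I expect to be the main obstacle—the one that genuinely exploits the geometry rather than bookkeeping—is the use of the Hilbert space identity to pass from $I - Q$ to $Q$. Bounding $\lVert I - Q\rVert_2$ directly by $\lVert T\rVert\lVert T^{-1}\rVert\lVert I - P\rVert$ would only reproduce the vacuous inequality $\lVert I - P\rVert \leq C_{BM}(X)\lVert I - P\rVert$. It is exactly the identity—available because $\mathbb{R}^2$ is an inner product space—that allows $\lVert P\rVert$ rather than $\lVert I - P\rVert$ to enter the final estimate, and the $P$-invariance of $V$ is what guarantees that $Q$ is an honest projection to which that identity applies.
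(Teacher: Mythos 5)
Your proof is correct, and it shares the paper's overall skeleton---the same two trivial cases, the same two-dimensional subspace $V = \operatorname{span}\{Px, (I-P)x\}$, the same transport through an isomorphism $T\colon V \to \mathbb{R}^2$ producing the squared distortion $\bigl(\lVert T\rVert \lVert T^{-1}\rVert\bigr)^2$, and the same infimum at the end---but it diverges at the key step. Where you conjugate to form $Q = T\, P|_V\, T^{-1}$ and invoke the Hilbert-space identity $\lVert I - Q\rVert_2 = \lVert Q\rVert_2$ as a black box, the paper never forms a projection on $\mathbb{R}^2$ at all: writing $Px = aT^{-1}u$ and $(I-P)x = bT^{-1}v$ with $u, v$ unit vectors, it introduces the swapped vector $y = bT^{-1}u + aT^{-1}v$ (so that $Py = bT^{-1}u$ and $(I-P)y = aT^{-1}v$) and exploits the Euclidean symmetry $\lVert au + bv\rVert_2 = \lVert bu + av\rVert_2$ to obtain $\bigl\lVert (I-P)x \bigr\rVert \le \lVert T\rVert\lVert T^{-1}\rVert\lVert P\rVert\lVert y\rVert$ and $\lVert y\rVert \le \lVert T\rVert\lVert T^{-1}\rVert\lVert x\rVert$. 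That swap is, in effect, an inline proof of the two-dimensional case of the identity you cite, so the paper's argument is self-contained. The distinction matters for the paper's internal architecture: the corollary following \autoref{thm:projection} \emph{derives} $\lVert I - P\rVert = \lVert P\rVert$ on inner product spaces from the theorem, so under your proof that derivation would become circular unless the $\mathbb{R}^2$ identity is grounded independently---which is harmless mathematically, since it is classical (see \citet{Szyld2006}) and elementary to check directly for a rank-one projection on $\mathbb{R}^2$, but you should state that dependence explicitly rather than lean on the corollary's statement. What your route buys is modularity and transparency: it isolates exactly where the inner-product structure enters and exhibits the theorem as ``conjugate the two-dimensional Hilbert identity through a near-optimal Banach--Mazur isomorphism.'' What the paper's route buys is independence from any prior projection identity, at the cost of the less conceptual swap computation.
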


\begin{proof}
  The inequality $ \lVert I - P \rVert \leq \bigl( 1 + \lVert P \rVert
  ^{-1} \bigr) \lVert P \rVert = 1 + \lVert P \rVert $ is elementary,
  so it suffices to show $ \bigl\lVert ( I - P) x \bigr\rVert \leq C
  _{ B M } (X) \lVert P \rVert \lVert x \rVert $ for all $ x \in X $.

  If $ ( I - P ) x = 0 $, then this inequality is trivial. On the
  other hand, if $ P x = 0 $, then $ ( I - P ) x = x $. Moreover, $
  \lVert P \rVert \geq 1 $ since $P$ is a nontrivial projection, while
  $ C _{ B M } (X) \geq 1 $ by \autoref{thm:bmInequality}. Hence, in
  this case we have $ \bigl\lVert ( I - P ) x \bigr\rVert = \lVert x
  \rVert \leq C _{ B M } (X) \lVert P \rVert \lVert x \rVert $.

  We may now assume that we are in the remaining case, where neither $
  P x $ nor $ ( I - P ) x $ vanishes, so $ V = \operatorname{ span }
  \bigl\{ P x , ( I - P ) x \bigr\} $ is a two-dimensional subspace of
  $X$. If $ T \colon V \rightarrow \ell_2^2 $ is a linear
  isomorphism, then there exist unit vectors $ u, v \in \ell_2^2 $
  and scalars $ a, b \in \mathbb{R} $ such that $ P x = a T ^{-1} u $
  and $ ( I - P ) x = b T ^{-1} v $. Thus, we may write $ x = P x + (
  I - P ) x = a T ^{-1} u + b T ^{-1} v $.

  Now, take $ y = b T ^{-1} u + a T ^{-1} v $, so that $ P y = b T
  ^{-1} u $ and $ ( I - P ) y = a T ^{-1} v $. It follows that
  \begin{align*}
    \bigl\lVert ( I - P ) x \bigr\rVert &= \lVert b T ^{-1} v \rVert\\
    &\leq \lvert b \rvert \lVert T ^{-1} \rVert \\
    &\leq \lVert T \rVert \lVert T ^{-1} \rVert \lVert b T ^{-1} u
    \rVert\\
    &= \lVert T \rVert \lVert T ^{-1} \rVert \lVert P y
    \rVert\\
    &\leq \lVert T \rVert \lVert T ^{-1} \rVert \lVert P \rVert \lVert
    y \rVert .
  \end{align*}
  Next, since $\ell_2^2$ is an inner product space, we have
  \begin{equation*}
    \lVert a u + b v \rVert _2 = ( a ^2 + 2 a b u \cdot v + b ^2 ) ^{ 1/2
    } = \lVert b u + a v \rVert _2 .
  \end{equation*} 
  Therefore,
  \begin{align*}
    \lVert y \rVert &= \lVert b T ^{-1} u + a T ^{-1} v  \rVert\\
    &\leq \lVert T ^{-1} \rVert \lVert b u + a v \rVert _2 \\
    &= \lVert T ^{-1} \rVert \lVert a u + b v \rVert _2 \\
    &\leq \lVert T \rVert \lVert T ^{-1} \rVert \lVert a T ^{-1} u + b
    T ^{-1} v \rVert \\
    &= \lVert T \rVert \lVert T ^{-1} \rVert \lVert x \rVert .
  \end{align*} 
  Altogether, we have now shown that
  \begin{equation*}
    \bigl\lVert ( I - P ) x \bigr\rVert \leq \lVert T \rVert \lVert T
    ^{-1} \rVert \lVert P \rVert \bigl( \lVert T \rVert \lVert T ^{-1}
    \rVert \lVert x \rVert \bigr) = \bigl( \lVert T \rVert \lVert T
    ^{-1} \rVert \bigr) ^2 \lVert P \rVert \lVert x \rVert .
  \end{equation*} 
  Finally, taking the infimum over all isomorphisms $T$ yields
  \begin{equation*}
    \bigl\lVert ( I - P ) x \bigr\rVert \leq \bigl( {d} _{ B M } ( V,
    \ell_2^2  ) \bigr) ^2 \lVert P \rVert \lVert x \rVert \leq C _{ B M }
    (X) \lVert P \rVert \lVert x \rVert ,
  \end{equation*} 
  which completes the proof.
\end{proof}

\begin{corollary}
  If $X$ is an inner product space, then $ \lVert I - P \rVert =
  \lVert P \rVert $.
\end{corollary}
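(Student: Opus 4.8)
The plan is to derive the corollary directly from \autoref{thm:projection} by exploiting the symmetry between $P$ and $I - P$, understanding $P$ to be a nontrivial projection as in that theorem (the identity clearly fails for $P = 0$ or $P = I$). First I would invoke \autoref{thm:bmInnerProduct}: since $X$ is an inner product space, $C_{BM}(X) = 1$. Because $\lVert P \rVert \geq 1$ for any nontrivial projection, the quantity $1 + \lVert P \rVert^{-1}$ is at least $1$, so the constant $C = \min\bigl\{ 1 + \lVert P \rVert^{-1}, C_{BM}(X) \bigr\}$ appearing in \autoref{thm:projection} collapses to $C = 1$. Applying that theorem therefore yields one inequality, $\lVert I - P \rVert \leq \lVert P \rVert$.

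For the reverse inequality, I would observe that $I - P$ is itself a nontrivial projection. A direct computation gives $(I - P)^2 = I - 2P + P^2 = I - P$, and the hypotheses $0 \neq P \neq I$ translate to $0 \neq I - P \neq I$. Hence \autoref{thm:projection} applies equally with $I - P$ in place of $P$. Writing $Q = I - P$, the theorem gives $\lVert I - Q \rVert \leq C_{BM}(X) \lVert Q \rVert$, that is, $\lVert P \rVert \leq \lVert I - P \rVert$, again because $C_{BM}(X) = 1$. Combining the two inequalities yields $\lVert I - P \rVert = \lVert P \rVert$, as claimed.

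I do not anticipate a genuine obstacle, since the result is an immediate consequence of \autoref{thm:projection} together with its applicability to $I - P$. The only points requiring care are checking that $I - P$ inherits both the idempotency and the nontriviality of $P$, so that the theorem can be applied symmetrically to each of them, and confirming that the minimum defining $C$ really reduces to $C_{BM}(X) = 1$ rather than to $1 + \lVert P \rVert^{-1}$.
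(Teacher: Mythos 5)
Your proof is correct and follows essentially the same route as the paper: apply \autoref{thm:projection} with $C_{BM}(X)=1$ (via \autoref{thm:bmInnerProduct}) to get $\lVert I - P \rVert \leq \lVert P \rVert$, then obtain the reverse inequality by the symmetry between the nontrivial projections $P$ and $I-P$. Your version merely spells out the idempotency and nontriviality checks for $I-P$ that the paper leaves implicit under the word ``symmetry.''
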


\begin{proof}
  Since $X$ is an inner product space, \autoref{thm:bmInnerProduct}
  implies $ C _{ B M } (X) = 1 $, so \autoref{thm:projection} gives $
  \lVert I - P \rVert \leq \lVert P \rVert $. The reverse inequality
  follows by symmetry of the projections $P$ and $ I - P $.
\end{proof}

\begin{remark}
  \autoref{thm:projection} is strictly sharper than the obvious
  estimate $ \lVert I - P \rVert \leq 1 + \lVert P \rVert $ whenever $
  C _{ B M } (X) < 1 + \lVert P \rVert ^{-1} $. In particular, since $
  1 < 1 + \lVert P \rVert ^{-1} \leq 2 $, this result is always
  sharper when $ C _{ B M } (X) = 1 $ (i.e., for Hilbert spaces) and
  never sharper for the opposite extreme, $ C _{ B M } (X) = 2 $
  (e.g., for non-reflexive spaces). Intermediate cases $ 1 < C _{ B M
  } (X) < 2 $ depend on the particular projection operator $P$.
\end{remark}

\section{Application to $L_p$ and Sobolev spaces}
\label{sec:lp}

In this section, we apply the foregoing theory to $ L _p $ and Sobolev
spaces, which are the most important and commonly-encountered Banach
spaces in finite element analysis.

The simplest possible example is $ X = \ell _p ^2 $, the
two-dimensional $ \ell _p $ space (i.e., $ \mathbb{R}^2 $ equipped
with the $p$-norm), where $ 1 \leq p \leq \infty $. In this case, it
is known that $ {d} _{ B M } ( \ell _p ^2 , \ell _2 ^2 ) = 2 ^{ \lvert
  1/p - 1/2 \rvert } $ (cf.~\citet[Proposition
II.E.8]{Wojtaszczyk1991}), so the Banach--Mazur constant is $ C _{ B M
} (X) = \bigl( {d} _{ B M } ( \ell _p ^2 , \ell _2 ^2 ) \bigr) ^2 = 2
^{ \lvert 2 / p - 1 \rvert } $. If $ 1 \leq p \leq 2 $, consider the
pair of projections
\begin{equation*}
  P ( x _0, x _1 ) = ( x _0 + x _1 , 0 ) , \qquad ( I - P ) ( x _0, x _1 )
  = ( - x _1, x _1 ) .
\end{equation*} 
It can be seen that the operator norms are attained at
\begin{equation*}
  \lVert P \rVert = \frac{ \bigl\lVert P ( 1,1 ) \bigr\rVert _p }{
    \bigl\lVert ( 1 , 1 ) \bigr\rVert _p } = \frac{ \bigl\lVert ( 2,0
    ) \bigr\rVert _p }{ \bigl\lVert (1,1) \bigr\rVert _p } = \frac{ 2
  }{ 2 ^{ 1/p } } = 2 ^{ 1 - 1/p } 
\end{equation*} 
and
\begin{equation*}
  \lVert I - P \rVert = \frac{ \bigl\lVert ( I - P ) ( 0, 1 )
    \bigr\rVert _p }{ \bigl\lVert ( 0 , 1 ) \bigr\rVert _p } =
  \frac{ \bigl\lVert (-1,1) \bigr\rVert _p }{ \bigl\lVert ( 0, 1 )
    \bigr\rVert _p } = \frac{ 2 ^{ 1/p} }{1} = 2 ^{1/p } .
\end{equation*} 
Hence, $ \lVert I - P \rVert = 2 ^{ 2/p -1 } \lVert P \rVert = C _{ B
  M } (X) \lVert P \rVert $; the same can be shown for $ 2 \leq p \leq
\infty $, simply by switching $P$ and $ I - P $. Therefore,
\autoref{thm:projection} is sharp for $ X = \ell _p ^2 $.

More generally, consider $ X = L _p (\mu) $ for some measure $\mu$.
In this case, it is known that $ {d} _{ B M } ( V , \ell _2 ^2 ) \leq
2 ^{ \lvert 1/p - 1/2 \rvert } $ for any two-dimensional subspace $V$
(cf.~\citet[Corollary III.E.9]{Wojtaszczyk1991}). Hence, taking $V$
isometrically isomorphic to $ \ell _p ^2 $---for instance, the span of
two unit-norm functions with disjoint support---implies $ C _{ B M }
(X) = 2 ^{ \lvert 2/p - 1 \rvert } $, as above. In particular, we
obtain the ``best'' case, $ C _{ B M } (X) = 1 $, only for $ p = 2 $;
the ``worst'' case, $ C _{ B M } (X) = 2 $, only for $ p = 1, \infty
$; and the strict inequality $ 1 < C _{ B M } (X) < 2 $ for $ 1 < p <
\infty $.

\begin{remark}
  In fact, here we have $ C _{ B M } (X) = C _{ N J } (X) $, since
  \citet{Clarkson1937} proved that $ C _{ N J } (X) = 2 ^{ \lvert 2 /
    p - 1 \rvert } $ for $ L _p $ spaces.
\end{remark}

Finally, consider the Sobolev space $ X = W ^1 _p (U) $ for $ U
\subset \mathbb{R}^n $. If $ U ^{ \sqcup ( n + 1 ) } $ denotes the
disjoint union of $ n + 1 $ copies of $U$, then we can isometrically
embed $ X \hookrightarrow L _p ( U ^{ \sqcup ( n + 1 ) } ) $ by taking
$ u \mapsto u \oplus \partial _1 u \oplus \cdots \oplus \partial _n u
$. Thus, any two-dimensional subspace of $X$ is isometrically
isomorphic to a two-dimensional subspace of $ L _p ( U ^{ \sqcup (n +
  1) } ) $, and we can again realize $ \ell _p ^2 \subset X $ by
taking the span of two unit-norm functions with disjoint
support. Hence, it follows from the previous discussion that, once
again, $ C _{ B M } (X) = 2 ^{ \lvert 2/p - 1 \rvert } $. More
generally, this argument holds for $ X = W ^k _p (U) $, $ k \in
\mathbb{N} $, since the map $ u \mapsto \bigoplus _{ \lvert \alpha
  \rvert \leq k } \partial _\alpha u $, where $\alpha$ denotes a
multi-index, embeds $X$ isometrically into the space of $ L _p $
functions on sufficiently many disjoint copies of $U$.

\section{The sharpened Petrov--Galerkin estimate}
\label{sec:petrovGalerkin}

We now finally apply \autoref{thm:projection} to the Petrov--Galerkin
projection $ P _h $, using the formalism reviewed in
\autoref{sec:background}.

\begin{theorem}
  \label{thm:petrovGalerkin}
  Let $ u \in X $ and $ u _h \in X _h $ be the solutions to
  \eqref{eqn:continuousProblem} and \eqref{eqn:discreteProblem},
  respectively. As before, let $M , m _h > 0$ denote the continuity
  and discrete inf-sup constants for the bilinear form $ a ( \cdot ,
  \cdot ) $. Then we have the error estimate
  \begin{equation*}
    \lVert u - u _h \rVert \leq C \frac{ M }{ m _h } \inf _{ x _h \in
    X _h } \lVert u - x _h \rVert ,
\end{equation*}
where $ C = \min \bigl\{ 1 + \frac{ m _h }{ M } , C _{ B M } (X)
\bigr\} $.
\end{theorem}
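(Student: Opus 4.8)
The plan is to recover Babuška's argument from the background section, which reduces the entire error estimate to bounding the operator norm $\lVert I - P_h \rVert$, and then to substitute the sharpened projection bound from \autoref{thm:projection} in place of the naïve triangle-inequality estimate $\lVert I - P_h \rVert \leq 1 + \lVert P_h \rVert$. Concretely, since $P_h$ fixes $X_h$ pointwise and $u_h = P_h u$, for any $x_h \in X_h$ we have the chain
\begin{equation*}
  \lVert u - u_h \rVert = \bigl\lVert (I - P_h) u \bigr\rVert = \bigl\lVert (I - P_h)(u - x_h) \bigr\rVert \leq \lVert I - P_h \rVert \, \lVert u - x_h \rVert ,
\end{equation*}
so everything hinges on estimating $\lVert I - P_h \rVert$ and then taking the infimum over $x_h$.

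The first substantive step is to verify that $P_h$ is a nontrivial projection, so that \autoref{thm:projection} applies. Idempotence $P_h = P_h^2$ follows because $P_h x_h = x_h$ on $X_h$ and $P_h$ maps into $X_h$; nontriviality ($P_h \neq 0$ and $P_h \neq I$) holds in the genuinely discrete setting where $\{0\} \neq X_h \subsetneq X$, which I would state as the operative hypothesis. The second step is to identify the operator norm of $P_h$ with the ratio $M / m_h$ of continuity and discrete inf-sup constants. This is precisely the identification already invoked in the Babuška computation reproduced in \autoref{sec:background}, where it is asserted that $1 + \lVert P_h \rVert = 1 + M/m_h$; I would cite that computation, so that $\lVert P_h \rVert = M / m_h$ is available without reproving it.

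With those two facts in hand, \autoref{thm:projection} gives
\begin{equation*}
  \lVert I - P_h \rVert \leq \min \bigl\{ 1 + \lVert P_h \rVert^{-1}, \, C_{BM}(X) \bigr\} \, \lVert P_h \rVert = \min \Bigl\{ 1 + \tfrac{m_h}{M}, \, C_{BM}(X) \Bigr\} \frac{M}{m_h} = C \frac{M}{m_h} ,
\end{equation*}
using $\lVert P_h \rVert^{-1} = m_h / M$. Combining this with the displayed inequality for $\lVert u - u_h \rVert$ and taking the infimum over all $x_h \in X_h$ yields the claimed estimate.

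I expect no genuine obstacle here: the theorem is a direct corollary of \autoref{thm:projection} once $P_h$ is recognized as a nontrivial projection with $\lVert P_h \rVert = M/m_h$. The only point requiring minor care is the degenerate borderline behavior — if $X_h = X$ then $P_h = I$ and $u = u_h$, while if $X_h = \{0\}$ the problem is vacuous — but in the nondegenerate discrete case the hypotheses of \autoref{thm:projection} are met cleanly. A secondary subtlety worth flagging is that \autoref{thm:projection} is stated for $C_{BM}(X)$ with $\dim X \geq 2$; since the interesting case always has $\dim X_h \geq 1$ sitting inside a strictly larger $X$, this dimension requirement is harmless, and I would note it in passing rather than belabor it.
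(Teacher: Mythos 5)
Your proposal is correct and follows the paper's own proof exactly: reduce to $\lVert u - u_h \rVert \leq \lVert I - P_h \rVert \inf_{x_h \in X_h} \lVert u - x_h \rVert$ via Babu\v{s}ka's argument, then apply \autoref{thm:projection} together with $\lVert P_h \rVert = M/m_h$ to bound $\lVert I - P_h \rVert \leq C\,M/m_h$. Your additional checks (nontriviality of $P_h$ and the degenerate cases $X_h = X$ or $X_h = \{0\}$) are sensible housekeeping that the paper leaves implicit, but they do not change the route.
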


\begin{proof}
  As in Babu\v{s}ka's argument (summarized in
  \autoref{sec:background}), we have
  \begin{equation*}
    \lVert u - u _h \rVert \leq \lVert I - P _h \rVert \inf _{ x _h
      \in X _h } \lVert u - x _h \rVert ,
  \end{equation*} 
  where $ P _h $ is the Petrov--Galerkin projection on $X$. Applying
  \autoref{thm:projection} yields $ \lVert I - P _h \rVert \leq C
  \lVert P _h \rVert \leq C \frac{ M }{ m _h } $, which completes the
  proof.
\end{proof}

\begin{corollary}[\citet{XuZi2003}]
  If $X$ is a Hilbert space, then
  \begin{equation*}
    \lVert u - u _h \rVert \leq \frac{ M }{ m _h } \inf _{ x _h \in X
      _h } \lVert u - x _h \rVert .
  \end{equation*} 
\end{corollary}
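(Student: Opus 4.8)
The plan is to derive this as an immediate specialization of \autoref{thm:petrovGalerkin}, observing that in the Hilbert-space case the constant $C$ collapses to $1$. The only point requiring verification is that $ C = \min \bigl\{ 1 + \frac{ m _h }{ M } , C _{ B M } (X) \bigr\} = 1 $; once this is established, the claimed estimate follows simply by substituting $C = 1$ into the conclusion of \autoref{thm:petrovGalerkin}.

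First I would note that a Hilbert space is in particular an inner product space, so \autoref{thm:bmInnerProduct} applies and gives $ C _{ B M } (X) = 1 $. Next, since $ M , m _h > 0 $, we have $ 1 + \frac{ m _h }{ M } > 1 $, so the second entry of the minimum is strictly the smaller of the two. Hence $ C = \min \bigl\{ 1 + \frac{ m _h }{ M } , 1 \bigr\} = 1 $, and the estimate of \autoref{thm:petrovGalerkin} reduces to $ \lVert u - u _h \rVert \leq \frac{ M }{ m _h } \inf _{ x _h \in X _h } \lVert u - x _h \rVert $, as desired.

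I expect no genuine obstacle here: all of the geometric work has already been carried out in \autoref{thm:projection}, whose use of the Banach--Mazur constant is precisely what replaces the idiosyncratic Hilbert-space projection identity of \citet{XuZi2003}. This corollary serves only to record that, specialized to Hilbert spaces, the general Banach-space estimate recovers their sharpened bound, confirming that the present result genuinely extends theirs rather than merely paralleling it.
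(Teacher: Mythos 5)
Your proposal is correct and follows exactly the paper's own argument: invoke \autoref{thm:bmInnerProduct} to get $C_{BM}(X)=1$, then conclude $C=1$ in \autoref{thm:petrovGalerkin}. Your explicit check that $1+\frac{m_h}{M}>1$ is a small (welcome) piece of bookkeeping the paper leaves implicit, but the route is identical.
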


\begin{proof}
  By \autoref{thm:bmInnerProduct}, we have $ C = C _{ B M } (X) = 1 $,
  so the result follows immediately from \autoref{thm:petrovGalerkin}.
\end{proof}

\end{document}